\newtheorem{thm}{Theorem}
\newtheorem{pro}[thm]{Proposition}
\newtheorem{cor}[thm]{Corollary}
\newtheorem{rmk}[thm]{Remark}
\newtheorem{q}{Problem}
\newtheorem*{ack}{Acknowledgement}
\def\V{{\mathbf V}}
\def\uu{\mathbf{u}}
\def\vv{\mathbf{v}}
\def\ww{\mathbf{w}}
\def\rr{\mathbf{r}}
\def\es{\varnothing}
\def\st{^\ast}
\def\ol#1{\overline{#1}}
\def\ig#1{\mathsf{IG}(#1)}
\def\pre{\;|\;}
\renewcommand\leq{\leqslant}
\renewcommand\geq{\geqslant}
\DeclareMathOperator\im{Im} \DeclareMathOperator\kr{Ker}
\begin{document}

\title{\Large\bf\uppercase{A note on maximal subgroups of free idempotent generated semigroups over bands}}

\author{{\large \textsc{Igor Dolinka}}\\[2mm]
\small Department of Mathematics and Informatics, University of Novi Sad,\\
\small Trg Dositeja Obradovi\'ca 4, 21101 Novi Sad, Serbia\\
\small E-mail: dockie@dmi.uns.ac.rs}

\date{}

\maketitle

\begin{abstract}
We prove that all maximal subgroups of the free idempotent generated semigroup over a band $B$ are free for all $B$
belonging to a band variety $\V$ if and only if $\V$ consists either of left seminormal bands, or of right seminormal
bands. \footnote{\textit{Mathematics subject classification numbers:} 20M05, 20M10, 20F05} \footnote{\textit{Key words
and phrases:} free idempotent generated semigroup, band, maximal subgroup} \footnote{The support of the Ministry of
Education and Science of the Republic of Serbia, through Grant No.\ 174019, is gratefully acknowledged.}
\end{abstract}

Let $S$ be a semigroup, and let $E=E(S)$ be the set of its idempotents; in fact, $E$, along with the multiplication
inherited from $S$, is a partial algebra. It turns out to be fruitful to restrict further the domain of the partial
multiplication defined on $E$ by considering only the pairs $e,f\in E$ for which either $ef\in\{e,f\}$ or
$fe\in\{e,f\}$ (i.e.\ $\{ef,fe\}\cap\{e,f\}\neq\es$). Note that if $ef\in\{e,f\}$ then $fe$ is an idempotent, and the
same is true if we interchange the roles of $e$ and $f$. Such unordered pairs $\{e,f\}$ are called \emph{basic pairs}
and their products $ef$ and $fe$ are \emph{basic products}.

The \emph{free idempotent generated semigroup over $E$} is defined by the following presentation:
$$\ig{E} = \langle E\pre e\cdot f=ef\text{ such that }\{e,f\}\text{ is a basic pair}\,\rangle .$$
Here $ef$ denotes the product of $e$ and $f$ in $S$ (which is again an idempotent of $S$), while $\cdot$ stands for the
concatenation operation in the free semigroup $E^+$ (also to be interpreted as the multiplication in its quotient
$\ig{E}$). An important feature of $\ig{E}$ is that there is a natural homomorphism from $\ig{E}$ onto the subsemigroup
of $S$ generated by $E$, and the restriction of $\phi$ to the set of idempotents of $\ig{E}$ is a
basic-product-preserving bijection onto $E$, see e.g.\ \cite{E4,Nam,P2}.

An important background to these definitions is the notion of the \emph{biordered set} \cite{Hi} of idempotents of a
semigroup and its abstract counterpart. The biordered set of idempotents of $S$ is just a partial algebra on $E(S)$
obtained by restricting the multiplication from $S$ to basic pairs of idempotents. In this way we have that if $B$ is a
band (an idempotent semigroup), then, even though there is an everywhere defined multiplication on $E(B)=B$, its
biordered set \cite{E2} is in general still a partial algebra. Another way of treating biordered sets is to consider
them as relational structures $(E(S),\leq^{(l)},\leq^{(r)})$, where the set of idempotents $E(S)$ is equipped by two
quasi-order relations defined by
\begin{align*}
e\leq^{(l)}f & \text{ if and only if }ef=e,\\
e\leq^{(r)}f & \text{ if and only if }fe=e.
\end{align*}
One of the main achievements of \cite{E3,E4,Nam} is the result that the class of biordered sets considered as
relational structures is \emph{axiomatisable}: there is in fact a finite system of formul\ae\ satisfied by biordered
sets such that any set endowed with two quasi-orders satisfying the axioms in question is a biordered set of
idempotents of some semigroup. In this sense we can speak about the free idempotent generated semigroup over a
biordered set $E$. A fundamental fact which justifies the term `free' is that $\ig{E}$ is the free object in the
category of all semigroups $S$ whose biordered set of idempotents is isomorphic to $E$: if $\psi:E\to E(S)$ is any
isomorphism of biordered sets, then it uniquely extends (via the canonical injection of $E$ into $\ig{E}$) to a
homomorphism $\psi':\ig{E}\to S$ whose image is the subsemigroup of $S$ generated by $E(S)$. This is also true if
$\psi$ is a (surjective) homomorphism of biordered sets (taken as relational structures), so that the freeness property
of $\ig{E}$ carries over to even wider categories of semigroups.

In this short note we consider $\ig{B}$, the free idempotent generated semigroup over (the biordered set of) a band
$B$; more precisely, we are interested in the question whether the maximal subgroups of these semigroups are free. It
was conjectured in \cite{McE} that each maximal subgroup of any semigroup of the form $\ig{E}$ is a free group.
Recently, this was disproved \cite{BMM1} (see also \cite{BMM2}), where a certain 72-element semigroup was found whose
biordered set $E$ of idempotents yields a maximal subgroup in $\ig{E}$ isomorphic to $\mathbb{Z}\oplus\mathbb{Z}$, the
rank 2 free abelian group. Here we will see that a particular 20-element regular band suffices for the same purpose. In
fact, as proved by Gray and Ru\v skuc in \cite{GR}, \emph{every} group can be isomorphic to a maximal subgroup of some
$\ig{E}$, while the assumption that the semigroup $S$ with $E=E(S)$ is finite yields a sole restriction that the groups
in question are finitely presented. This puts forward many new questions, one of which is the characterisation of bands
$B$ for which all subgroups of $\ig{B}$ are free.

More specifically, as a first approximation to the latter question, we may ask for a description of all varieties $\V$
of bands with the property that for each $B\in\V$ the maximal subgroups of $\ig{B}$ are free. To facilitate the
discussion, we depict in Fig.\ \ref{lb} the bottom part of the lattice $\mathcal{L}(\mathbf{B})$ of all band varieties,
along with their standard labels (see also \cite[Diagram II.3.1]{Pe-LinS}).

\begin{figure}[ht]\centering
\begin{picture}(140.00,190.00)
\put(70.00,5.00){\circle*{4}} \put(40.00,35.00){\circle*{4}} \put(70.00,35.00){\circle*{4}}
\put(100.00,35.00){\circle*{4}} \put(40.00,65.00){\circle*{4}} \put(70.00,65.00){\circle*{4}}
\put(100.00,65.00){\circle*{4}} \put(10.00,95.00){\circle*{4}} \put(70.00,95.00){\circle*{4}}
\put(130.00,95.00){\circle*{4}} \put(40.00,125.00){\circle*{4}} \put(100.00,125.00){\circle*{4}}
\put(10.00,155.00){\circle*{4}} \put(70.00,155.00){\circle*{4}} \put(130.00,155.00){\circle*{4}}

\put(70.00,5.00){\line(1,1){30.00}} \put(70.00,5.00){\line(-1,1){30.00}} \put(40.00,35.00){\line(1,1){30.00}}
\put(100.00,35.00){\line(-1,1){30.00}} \put(40.00,65.00){\line(1,1){30.00}} \put(100.00,65.00){\line(-1,1){30.00}}
\put(40.00,65.00){\line(0,-1){30.00}} \put(100.00,65.00){\line(0,-1){30.00}} \put(70.00,35.00){\line(0,-1){30.00}}
\put(70.00,95.00){\line(0,-1){30.00}} \put(70.00,35.00){\line(1,1){60.00}} \put(70.00,35.00){\line(-1,1){60.00}}
\put(10.00,95.00){\line(1,1){60.00}} \put(130.00,95.00){\line(-1,1){60.00}} \put(70.00,95.00){\line(1,1){60.00}}
\put(70.00,95.00){\line(-1,1){60.00}} \put(10.00,155.00){\line(1,1){15.00}} \put(130.00,155.00){\line(-1,1){15.00}}
\put(70.00,155.00){\line(1,1){15.00}} \put(70.00,155.00){\line(-1,1){15.00}}

\put(40.00,180.00){\makebox(0,0)[cc]{$\vdots$}} \put(100.00,180.00){\makebox(0,0)[cc]{$\vdots$}}
\put(70.00,40.00){\makebox(0,0)[cb]{\footnotesize $\mathbf{SL}$}} \put(35.00,35.00){\makebox(0,0)[rc]{\footnotesize
$\mathbf{LZ}$}} \put(105.00,35.00){\makebox(0,0)[lc]{\footnotesize $\mathbf{RZ}$}}
\put(70.00,70.00){\makebox(0,0)[cb]{\footnotesize $\mathbf{ReB}$}} \put(35.00,65.00){\makebox(0,0)[rc]{\footnotesize
$\mathbf{LNB}$}} \put(105.00,65.00){\makebox(0,0)[lc]{\footnotesize $\mathbf{RNB}$}}
\put(70.00,100.00){\makebox(0,0)[cb]{\footnotesize $\mathbf{NB}$}} \put(5.00,95.00){\makebox(0,0)[rc]{\footnotesize
$\mathbf{LRB}$}} \put(135.00,95.00){\makebox(0,0)[lc]{\footnotesize $\mathbf{RRB}$}}
\put(70.00,160.00){\makebox(0,0)[cb]{\footnotesize $\mathbf{RB}$}} \put(5.00,155.00){\makebox(0,0)[rc]{\footnotesize
$\mathbf{LSNB}$}} \put(135.00,155.00){\makebox(0,0)[lc]{\footnotesize $\mathbf{RSNB}$}}
\put(35.00,125.00){\makebox(0,0)[rc]{\footnotesize $\mathbf{LQNB}$}}
\put(105.00,125.00){\makebox(0,0)[lc]{\footnotesize $\mathbf{RQNB}$}}
\end{picture}
\caption{The bottom part of the lattice of all varieties of bands}\label{lb}
\end{figure}
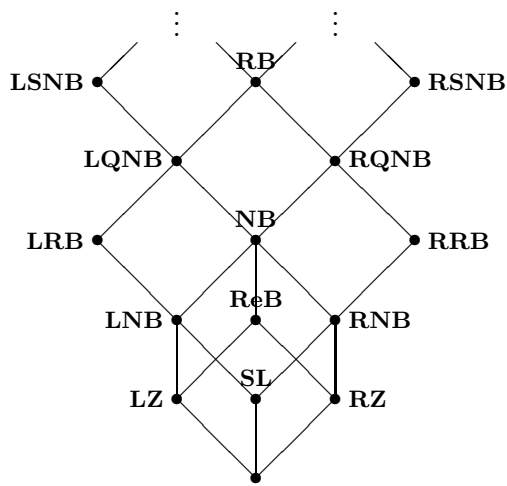

The main result of this note is the following.

\begin{thm}\label{t-main}
Let $\V$ be a variety of bands. Then $\ig{B}$ has all its maximal subgroups free for all $B\in\V$ if and only if $\V$
is contained either in $\mathbf{LSNB}$ or in $\mathbf{RSNB}$.
\end{thm}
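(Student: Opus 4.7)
The plan is to treat both directions of the equivalence using the general machinery that describes maximal subgroups of free idempotent generated semigroups via finite presentations built from the biordered set, developed by Nambooripad \cite{Nam} and refined in \cite{BMM1,GR}. For a band $B$, every $\mathscr{D}$-class is a rectangular band $I \times \Lambda$, and the maximal subgroup of $\ig{B}$ at an idempotent $e$ in such a class admits a presentation whose generators label the edges of the associated bipartite (Graham--Houghton) graph on $I \sqcup \Lambda$, after contracting a spanning tree, and whose relators correspond to singular squares induced by idempotents lying outside the $\mathscr{D}$-class. The whole problem thus reduces to controlling, in terms of the identities satisfied by $B$, which squares are singular (and so trivialise a relator) and which cycles in the graph survive.

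For the sufficiency direction, suppose $B \in \mathbf{LSNB}$; the case $B \in \mathbf{RSNB}$ is symmetric. I would use the defining identity of left seminormal bands to restrict products of four idempotents enough that every square with vertices in a fixed $\mathscr{D}$-class of $B$ turns out to be `left-singular'. With a judicious choice of spanning tree, every relator in the resulting presentation can then be folded in and eliminated together with its corresponding generator, leaving an empty relator set and hence a free group. The main technical work here is to translate the left seminormal identity into statements about basic products of idempotents straddling a fixed $\mathscr{D}$-class, and match these against the configurations that produce the relators.

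For the necessity direction, the first task is to read off from Figure \ref{lb} the minimal band varieties escaping both $\mathbf{LSNB}$ and $\mathbf{RSNB}$, and then to show that the hypothesis $\V \not\subseteq \mathbf{LSNB}$ and $\V \not\subseteq \mathbf{RSNB}$ forces $\V$ to contain a specific 20-element regular band $B_0$, obtained by combining small bands in $\V$ that witness each failure (using closure of $\V$ under direct products and subbands). I would then pinpoint a $\mathscr{D}$-class of $B_0$ whose Graham--Houghton graph contains a cycle that no singular-square relator trivialises, and explicitly compute the maximal subgroup of $\ig{B_0}$ to be $\mathbb{Z} \oplus \mathbb{Z}$, matching the non-free group found in \cite{BMM1}. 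I expect this final computation to be the main obstacle: a concrete and delicate combinatorial calculation requiring careful bookkeeping of the entire multiplication of $B_0$ and the simplification of the resulting presentation down to a rank-two free abelian group.
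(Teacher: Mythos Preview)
Your plan aligns with the paper's: sufficiency via the Gray--Ru\v skuc presentation together with the seminormal identity, necessity via an explicit $20$-element regular band yielding $\mathbb{Z}\oplus\mathbb{Z}$. Two points deserve sharpening.

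In the sufficiency direction the key is not that \emph{every} square becomes singular (that would trivialise the group rather than make it free) but that one of the two singularisation types degenerates: for $B\in\mathbf{LSNB}$ the defining identity forces each $\sigma_\varepsilon^{(r)}$ to be a constant map on $J$, so up-down singular rectangles collapse to a single column and only left-right ones survive (dually for $\mathbf{RSNB}$). The surviving relations then identify generators along an equivalence on $I$ (respectively $J$), and Tietze eliminations leave a free basis indexed by a product of the form $(|I|-1)(m-1)$.

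For the necessity direction there is no need to assemble $B_0$ from witnesses via products and subbands: the lattice in Figure~\ref{lb} already shows that $\mathbf{RB}$ is the unique minimal variety contained in neither $\mathbf{LSNB}$ nor $\mathbf{RSNB}$, so your hypothesis gives $\mathbf{RB}\subseteq\V$ immediately, and the paper's $20$-element band (an explicit subband of the free regular band on four generators, with a $2\times 2$ class sitting above a $4\times 4$ class) is already in $\V$. The computation of $\mathbb{Z}\oplus\mathbb{Z}$ is then exactly as you anticipate.
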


This theorem is a direct consequence of the following two propositions.

\begin{pro}\label{p1}
For any left (right) seminormal band $B$, all maximal subgroups of $\ig{B}$ are free.
\end{pro}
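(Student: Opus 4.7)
By the obvious $B\mapsto B^{\mathrm{op}}$ duality, which swaps $\mathbf{LSNB}$ with $\mathbf{RSNB}$ and induces an anti-isomorphism $\ig{B}\to\ig{B^{\mathrm{op}}}$ preserving maximal subgroups, I may assume $B\in\mathbf{LSNB}$. Fix an idempotent $\varepsilon\in B$ and write $D=D_\varepsilon$ for its $\mathcal{D}$-class, regarded as a rectangular band $I\times\Lambda$ with $(i,\lambda)$-entry $e_{i,\lambda}$. The machinery is the Gray--Ru\v skuc presentation of the maximal subgroup $H_\varepsilon$ of $\ig{B}$ at $\varepsilon$ from \cite{GR}: the generators are formal symbols $[i,\lambda]$ indexed by the idempotents of $D$, with $[i,\lambda]=1$ for $(i,\lambda)$ in a chosen spanning tree of the bipartite $R$--$L$ incidence graph of $D$, and the only other relations are the \emph{singular square relations} $[i,\lambda][i,\mu]^{-1}[j,\mu][j,\lambda]^{-1}=1$, one for each singular square $\{e_{i,\lambda},e_{i,\mu},e_{j,\lambda},e_{j,\mu}\}\subseteq D$ in the biordered set of $B$.

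The heart of the argument is then the following combinatorial claim: \emph{in a left seminormal band, every singular square in a $\mathcal{D}$-class is degenerate}, that is, it has either two equal rows ($i=j$) or two equal columns ($\lambda=\mu$). To prove this I would take a putative non-degenerate singular square together with a witness $z\in B$ -- so in the left-singular case $ze_{i,\lambda}=e_{j,\lambda}$, $ze_{i,\mu}=e_{j,\mu}$, $e_{i,\lambda}z=e_{i,\lambda}$ and $e_{i,\mu}z=e_{i,\mu}$ -- and then apply the defining identity of $\mathbf{LSNB}$ to a short word in $z$ and two or three corners, exploiting the rectangular band structure of $D$ to force a coincidence such as $e_{i,\lambda}=e_{j,\lambda}$, contradicting non-degeneracy. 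The right-singular case would be handled symmetrically.

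Granted the claim, each singular square relation in the Gray--Ru\v skuc presentation has at least two of its four symbols equal and therefore reduces to a trivial identity, whence $H_\varepsilon$ is free on the non-tree generators. The main obstacle is the classification step itself: extracting the coordinate collapse from the defining identity of $\mathbf{LSNB}$ will likely require a case analysis based on the $\mathcal{D}$-class of the witness $z$ in the semilattice decomposition of $B$ and on the left-versus-right singular distinction; it is precisely this step which must break down for the varieties strictly above $\mathbf{LSNB}$ and $\mathbf{RSNB}$ in the lattice shown in Fig.~\ref{lb}.
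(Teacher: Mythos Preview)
Your central claim---that in a left seminormal band every singular square is degenerate---is false, and the proposed ``symmetric'' treatment of the right-singular case is exactly where the argument collapses. Dualising the paper's computation, in $\mathbf{LSNB}$ the identity $xyz\id xyzxz$ forces $xa\,\mathcal{L}\,ya$ whenever $x,y$ lie in the same $\mathcal{D}$-class $D$ and $D_a\geq D$; hence each $\sigma_\varepsilon^{(r)}$ is a \emph{constant} map on $J$. This does kill your left-singular (the paper's up-down) squares, since the two required fixed points of $\sigma_\varepsilon^{(r)}$ must coincide. But for a right-singular (left-right) square one only needs $i,k$ to be fixed by $\sigma_\varepsilon^{(l)}$ together with $(j,\ell)\in\kr\sigma_\varepsilon^{(r)}$ and $\ell\in\im\sigma_\varepsilon^{(r)}$; when $\sigma_\varepsilon^{(r)}$ is constant the kernel condition is vacuous, so one obtains genuinely non-degenerate squares $(i,k;j,\ell_0)$ for any distinct fixed points $i\neq k$ of $\sigma_\varepsilon^{(l)}$ and any $j\neq\ell_0$. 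A two-element semilattice with a single idempotent on top acting on a $2\times 2$ rectangular band below (constant left action, identity right action, or vice versa) already exhibits such a square. The situation is inherently asymmetric because $\mathbf{LSNB}$ is not self-dual, so there is no hope of handling the second case ``symmetrically''.

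What the paper actually does (in the dual $\mathbf{RSNB}$ setting) is accept these non-degenerate squares and analyse the relations they produce. Because $\sigma_\varepsilon^{(l)}$ is constant with image $\{k_0\}$, every surviving relation has the shape $f_{ij}^{-1}f_{i\ell}=f_{k_0j}^{-1}f_{k_0\ell}$ with $i$ arbitrary; one then deduces $f_{ij}^{-1}f_{i\ell}=f_{kj}^{-1}f_{k\ell}$ for \emph{all} $i,k$ and all $j,\ell$ related under the equivalence $\theta_B$ on $J$ generated by ``$j_1,j_2$ are common fixed points of some $\sigma_\varepsilon^{(r)}$''. These relations let one identify generators within each $\theta_B$-class and eliminate all relations, leaving a free group of rank $(|I|-1)(m-1)$ with $m=|J/\theta_B|$. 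Note in particular that this rank is in general strictly smaller than the $(|I|-1)(|J|-1)$ your argument would predict, confirming that nontrivial relations really are present.
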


\begin{pro}\label{p2}
There exists a regular band $B$ such that $\ig{B}$ has a maximal subgroup isomorphic to $\mathbb{Z}\oplus\mathbb{Z}$.
\end{pro}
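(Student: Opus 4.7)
The strategy is to exhibit a concrete 20-element regular band $B$ and compute one of its maximal subgroups in $\ig{B}$ by means of the presentation machinery developed by Gray and Ru\v skuc in \cite{GR}. Their framework associates, to any idempotent $e$ in a $\mathcal{D}$-class $D$ of a biordered set, a 2-complex whose 1-skeleton is (essentially) the Graham--Houghton bipartite incidence graph of the rectangular $\mathcal{D}$-class $D$, and whose 2-cells are the ``singular squares'' produced by basic products landing in strictly lower $\mathcal{D}$-classes; the maximal subgroup $H_e$ of $\ig{B}$ based at $e$ is precisely the fundamental group of this complex. For a band $B$, the $\mathcal{D}$-classes are rectangular subbands, so the topology of this complex is especially transparent and can be analysed by direct combinatorial means.

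Given this reduction, the plan is to choose a band $B$ whose (chosen) $\mathcal{D}$-class $D$ is a rectangular band $I\times\Lambda$ with $|I|$ and $|\Lambda|$ small enough that $|D|$ sits comfortably below $20$, while still making the incidence graph of $D$ contain enough independent cycles to produce a free group of rank at least two once a spanning tree is contracted. The remaining idempotents would be placed in strictly lower $\mathcal{D}$-classes and their structural maps chosen so that the basic products from $D$ into these lower classes contribute exactly one 2-cell relation, namely a single commutator. Concretely, I would build $B$ as a semilattice of rectangular bands over a small meet-semilattice, specify the connecting maps by a multiplication table, and then verify that these maps make $B$ satisfy the regular band identity $axya = axaya$.

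With such a $B$ in hand, the proof reduces to three bookkeeping tasks: (i) check that $B$ lies in $\mathbf{ReB}$; (ii) enumerate every basic pair $\{e,f\}$ with $e,f\in D$ and $ef\notin D$, reading off the corresponding square of the Gray--Ru\v skuc complex; and (iii) run Tietze transformations on the resulting finite presentation until it is visibly $\langle a,b \mid aba^{-1}b^{-1}\rangle$, i.e., $\mathbb{Z}\oplus\mathbb{Z}$. The main obstacle is the fine balance required in step (ii): too few lower-$\mathcal{D}$-class idempotents and one obtains a nonabelian free group (consistent with the conjecture of \cite{McE} but not with the desired conclusion); too many and the extra squares either kill a generator, collapsing $H_e$ to a proper quotient of $\mathbb{Z}\oplus\mathbb{Z}$, or force the structural maps to violate regularity. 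It is precisely this tension that the author's specific 20-element example is engineered to resolve, and verifying that it does so is the substantive content of the proof.
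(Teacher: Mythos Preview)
Your outline is not a proof: nothing is constructed and nothing is checked. For an existence statement like this, the content \emph{is} the concrete example together with the verification, and you have supplied neither. But set that aside; there is also a structural error in the plan that would make any attempted construction along these lines fail.

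You propose to place the ``remaining idempotents'' in strictly \emph{lower} $\mathcal{D}$-classes than the target class $D$, and to read off singular squares from ``basic pairs $\{e,f\}$ with $e,f\in D$ and $ef\notin D$''. In a band this cannot work, for two reasons. First, each $\mathcal{D}$-class of a band is a rectangular subband, so if $e,f\in D$ then $ef\in D$; your enumeration in step~(ii) is over the empty set. Second, and more importantly, if $\varepsilon$ singularises a square $(e,f,g,h)$ in $D$ then by definition $\varepsilon e=e$ (or the dual), whence $D_\varepsilon\geq D$ in the semilattice order. Idempotents sitting strictly below $D$ can never singularise anything in $D$: multiplying an element of $D$ by such an $\varepsilon$ drops you out of $D$ entirely, so the induced transformations $\sigma_\varepsilon^{(l)},\sigma_\varepsilon^{(r)}$ on $I$ and $J$ are not even defined. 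The relations of type~(iii) in the Gray--Ru\v skuc presentation all come from idempotents \emph{above} $D$.

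The paper's construction accordingly places a $2\times 2$ rectangular band $D_1$ \emph{above} a $4\times 4$ rectangular band $D_0$ (totalling $20$ elements), realised concretely inside the free regular band on $a,b,c,d$. The four elements of $D_1$ act on the rows and columns of $D_0$ by explicit rank-$2$ transformations; one then lists the eight resulting singular rectangles, writes down the corresponding relations~\eqref{rel2}, and performs the obvious Tietze eliminations to arrive at $\langle x,y\mid xy=yx\rangle$. Your instinct about the overall machinery and the size of the example is right, but you have the semilattice oriented upside down, and until that is fixed there is no mechanism in your plan that produces any relations at all.
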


The first of these propositions is a generalisation of the well known result of Pastijn \cite[Theorem 6.5]{P2} (cf.\
also \cite{NP,P1}) that all maximal subgroups of $\ig{B}$ are free for any normal band $B$. The other one supplies a
simpler example with the same non-free maximal subgroup than the one considered in \cite[Section 5]{BMM1}. The method
used is the one from \cite{GR}, which is based on the Reidemeister-Schreier type rewriting process for obtaining
presentations of maximal subgroups of semigroups developed in \cite{R-JA}. So, before turning to the proofs of the
above two propositions, we briefly present this general method yielding presentations for maximal subgroups of
$\ig{E}$, $E=E(S)$, for an arbitrary semigroup $S$, and then we explain its particular case when $S$ is a band. Along
the way, we assume some familiarity with the most basic notions of semigroup theory, such as Green's relations and the
structure of bands, see, for example, \cite{Hi,Pe-LinS}.

Let $S$ be a semigroup and let $D$ be a $\mathcal{D}$-class of $S$ containing an idempotent $e_0\in E(S)$. We are going
to label the $\mathcal{R}$-classes contained in $D$ by $R_i$, $i\in I$, while $L_j$, $j\in J$, is the list of all
$\mathcal{L}$-classes of $D$. The $\mathcal{H}$-class $R_i\cap L_j$ will be denoted by $H_{ij}$. Define
$\mathcal{K}=\{(i,j):\ H_{ij}\text{ is a group}\}$; as is well known, $(i,j)\in\mathcal{K}$ if and only if $H_{ij}$
contains an idempotent, which we denote by $e_{ij}$. There is no loss of generality if we assume that both $I$ and $J$
contain an index 1, so that $e_0=e_{11}$.

For a word $\ww\in E\st$, let $\ol\ww$ denote the image of $\ww$ under the canonical monoid homomorphism of $E\st$ into
$S^1$: in other words, when $\ww$ is non-empty, $\ol\ww$ is just the element of $S$ obtained by multiplying in $S$ the
idempotents the concatenation of which is $\ww$. We say that a system of words $\rr_j,\rr'_j\in E\st$, $j\in J$, is a
\emph{Schreier system of representatives} for $D$ if for each $j\in J$:
\begin{itemize}
\item the right multiplications by $\ol{\rr_j}$ and $\ol{\rr'_j}$ are mutually inverse $\mathcal{R}$-class preserving
bijections $L_1\to L_j$ and $L_j\to L_1$, respectively (so, in particular, right multiplication by $\rr_1$ is the
identity mapping on $L_1$);
\item each prefix of $\rr_j$ coincides with $\rr_{j'}$ for some $j'\in J$ (in particular, the empty word is
just $\rr_1$).
\end{itemize}
It is well-known that such a Schreier system always exists. In the following, we assume that one particular Schreier
system has been fixed.

In addition, we will assume that a mapping $i\mapsto j(i)$ has been specified such that $(i,j(i))\in\mathcal{K}$: such
$j(i)$ must exist for each $i\in I$, since $D$ is a regular $\mathcal{D}$-class (as it contains an idempotent), and so
each $\mathcal{R}$-class $R_i$ must contain an idempotent. The index $j(i)\in J$ is called the \emph{anchor} of $R_i$.

Finally, call a \emph{square} a quadruple of idempotents $(e,f,g,h)$ in $D$ such that
$$\begin{array}{ccc}
e & \mathcal{R} & f\\[1mm]
\mathcal{L} && \mathcal{L}\\[1mm]
g & \mathcal{R} & h.
\end{array}$$
Then there are $i,k\in I$ and $j,\ell\in J$ such that $e\in H_{ij}$, $f\in H_{i\ell}$, $g\in H_{kj}$ and $h\in
H_{k\ell}$. For an idempotent $\varepsilon\in S$
we say that it \emph{singularises} the square $(e,f,g,h)$ if any of the following two cases takes place:
\begin{itemize}
\item[(a)] $\varepsilon e=e$ and $\varepsilon g=g$, while $e=f\varepsilon$; or
\item[(b)] $e=\varepsilon g$, along with $e\varepsilon=e$ and $f\varepsilon=f$.
\end{itemize}
Note that case (a) implies $\varepsilon f=f$, $\varepsilon h=h$, $e\varepsilon=e$ and $g=g\varepsilon=h\varepsilon$,
while conditions $\varepsilon e=e$, $f=\varepsilon f=\varepsilon h$, $g\varepsilon=g$ and $h\varepsilon=h$ follow from
(b). The square $(e,f,g,h)$ is \emph{singular} if it is singularised by some idempotent of $S$. Let $\Sigma$ be the set
of all quadruples $(i,k;j,\ell)\in I\times I\times J\times J$ (to be called \emph{singular rectangles}) such that
$(e_{ij},e_{i\ell},e_{kj},e_{k\ell})$ is a singular square in $D$.

The required general result of \cite{GR} can be now paraphrased as follows.

\begin{thm}[Theorem 5 of \cite{GR}]\label{Bob-Nik}
Let $S$ be a semigroup with a non-empty set of idempotents $E=E(S)$. With the notation as above, the maximal subgroup
of the free idempotent generated semigroup $\ig{E}$ containing $e_{11}\in E$ is presented by
$\langle\Gamma\pre\mathfrak{R}\rangle$, where $\Gamma=\{f_{ij}:\ (i,j)\in\mathcal{K}\}$, while $\mathfrak{R}$ consists
of three types of relations:
\begin{itemize}
\item[\rm (i)] $f_{i,j(i)}=1$ for all $i\in I$;
\item[\rm (ii)] $f_{ij}=f_{i\ell}$ for all $i\in I$ and $j,\ell\in J$ such that $\rr_j\cdot e_{i\ell}=\rr_\ell$;
\item[\rm (iii)] $f_{ij}^{-1}f_{i\ell}=f_{kj}^{-1}f_{k\ell}$ for all $(i,k;j,\ell)\in \Sigma$.
\end{itemize}
\end{thm}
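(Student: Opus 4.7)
The plan is to apply the Reidemeister--Schreier rewriting procedure for semigroup presentations from \cite{R-JA}, tailored to the defining presentation
\[
  \ig{E}=\langle E\pre e\cdot f=ef,\ \{e,f\}\text{ basic}\rangle,
\]
and then Tietze-simplify the resulting presentation of the maximal subgroup $H$ of $\ig{E}$ at $e_{11}$ down to the three listed families of relations. A preliminary step is to lift the Green structure of $D$ into $\ig{E}$ via the canonical homomorphism $\phi:\ig{E}\to S$: because $\phi$ is a basic-product-preserving bijection on idempotents, the $\mathcal{D}$-class of $e_{11}$ in $\ig{E}$ carries the same $I\times J$-grid as $D\subseteq S$, so the indexing $\mathcal{K}$, the Schreier representatives $\rr_j,\rr'_j$ and the anchors $j(i)$, read as words in $E^+$, make sense in $\ig{E}$ as well.

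Using these data I would build a system of coset representatives for $H\leq\ig{E}$ in the sense of \cite{R-JA}: for each $\mathcal{H}$-class $H_{ij}$ of $D$, a canonical word of the shape ``anchor word reaching $H_{i,j(i)}$'' followed by the Schreier extension $\rr_j$, together with a dual choice for returning from $H_{ij}$ to $H_{11}$. Running the rewriting procedure on this system produces a generating set for $H$ indexed by (coset representative, idempotent)-pairs; however, the fact that each $e_{ij}$ is itself idempotent and that each $H_{ij}$ with $(i,j)\in\mathcal{K}$ is a group collapses this set to a single generator $f_{ij}$ per $(i,j)\in\mathcal{K}$, recovering $\Gamma$. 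Relations of type (i) then appear immediately, because the loop built from the anchor of $R_i$ is by construction the identity of $H$, so $f_{i,j(i)}=1$.

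The substantive remaining work is to rewrite every defining relation $e\cdot f=ef$ of $\ig{E}$ in terms of the $f_{ij}$ and to show that all such rewrites follow from (i)--(iii). I would proceed by a case split on the Green relationship of $e$, $f$, $ef$ inside $D$: when $\{e,f\}$ is an $\mathcal{L}$-related basic pair realising a prefix identity $\rr_j\cdot e_{i\ell}=\rr_\ell$, the rewriting reproduces exactly a relation of type (ii); when it is an $\mathcal{R}$-related basic pair sitting above an anchor, it collapses by (i); and when the pair fits into a singular square, the singularising idempotent $\varepsilon$ supplies the rewriting that equates $f_{ij}^{-1}f_{i\ell}$ with $f_{kj}^{-1}f_{k\ell}$, producing type (iii). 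The main obstacle is completeness: one must verify that basic pairs whose factors lie in distinct $\mathcal{D}$-classes rewrite trivially, that intra-$D$ basic pairs not already covered by (i) or (ii) always embed into a singular square (or else yield a relation already forced by (i)--(iii)), and that no further relations survive the rewriting. Carrying this case analysis out cleanly, making essential use of the axioms of biordered sets to produce the required singularising idempotents, is the structural heart of the argument.
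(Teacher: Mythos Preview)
The paper does not prove this statement at all: it is quoted verbatim as Theorem~5 of \cite{GR} and used as a black box, so there is no ``paper's own proof'' to compare your proposal against. Your sketch is a plausible outline of the Gray--Ru\v{s}kuc argument (Reidemeister--Schreier rewriting from \cite{R-JA} applied to the defining presentation of $\ig{E}$, followed by Tietze reductions), and the high-level case split you describe is in the right spirit.

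That said, as a self-contained proof your proposal has a real gap at the ``completeness'' step you yourself flag. The difficulty is not merely that basic pairs across $\mathcal{D}$-classes rewrite trivially, but that the Reidemeister--Schreier generators are indexed by \emph{all} pairs (coset representative, letter $e\in E$), not just by those $e$ lying in $D$; collapsing this huge generating set down to $\{f_{ij}:(i,j)\in\mathcal{K}\}$ requires a careful analysis of how multiplication by an arbitrary idempotent moves one between $\mathcal{H}$-classes of the lifted $\mathcal{D}$-class in $\ig{E}$, and this is where the biordered-set axioms and the properties of singular squares do their real work. Your sketch asserts this collapse but does not indicate the mechanism. If you intend to supply a proof rather than cite \cite{GR}, that reduction is the part that needs to be written out.
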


For our purpose, we would like to focus on the particular case when $S$ is a band. Then, clearly, $\mathcal{K}=I\times
J$ and $D=\{e_{ij}:\ i\in I,\ j\in J\}$. Since $\mathcal{D}=\mathcal{J}$ in any band, the set of all
$\mathcal{D}$-classes of $B$ is partially ordered; it instantly turns out that, by definition, if $\varepsilon$
singularises a square $(e,f,g,h)$ in $D$, then $D_\varepsilon\geq D$. Now any such $\varepsilon\in B$ induces a pair of
transformations on $I$ and $J$, respectively, in the following sense. For each $i\in I$ and $j\in J$ there are $i',k\in
I$ and $j',\ell\in J$ such that $\varepsilon e_{ij}=e_{i'\ell}$ and $e_{ij}\varepsilon =e_{kj'}$. One immediately sees
that it must be $\ell=j$ and $k=i$, so that $B$ acts on the left on $I$ and on the right on $J$. Thus it is convenient
to write the transformation $\sigma=\sigma_\varepsilon^{(l)}$ induced by $\varepsilon$ on $I$ to the left of its
argument (so that $ee_{ij}=e_{\sigma(i)j}$), while the analogous transformation $\sigma'=\sigma_\varepsilon^{(r)}$ on
$J$ is written to the right (resulting in the rule $e_{ij}e=e_{i(j)\sigma'}$).

\begin{cor}
Let $B$ be a band, let $D$ be a $\mathcal{D}$-class of $B$, and let $e_{11}\in D$. Then the maximal subgroup
$G_{e_{11}}$ of $\ig{B}$ containing $e_{11}$ is presented by $\langle\Gamma\pre\mathfrak{R}\rangle$, where
$\Gamma=\{f_{ij}:\ i\in I, j\in J\}$ and $\mathfrak{R}$ consists of relations
\begin{equation}
f_{i1}=f_{1j}=f_{11}=1\label{rel1}
\end{equation}
for all $i\in I$ and $j\in J$, and
\begin{equation}
f_{ij}^{-1}f_{i\ell} = f_{kj}^{-1}f_{k\ell},\label{rel2}
\end{equation}
where  for some $\varepsilon\in B$ such that $D_\varepsilon\geq D$ the indices $i,k\in I$, $j,\ell\in J$ satisfy one of
the following two conditions:
\begin{itemize}
\item[\rm (a)] $\sigma_\varepsilon^{(l)}(i)=i$, $\sigma_\varepsilon^{(l)}(k)=k$, and $(j)\sigma_\varepsilon^{(r)}=
(\ell)\sigma_\varepsilon^{(r)}=\ell$,
\item[\rm (b)] $\sigma_\varepsilon^{(l)}(i)=\sigma_\varepsilon^{(l)}(k)=k$, $(j)\sigma_\varepsilon^{(r)}=j$ and
$(\ell)\sigma_\varepsilon^{(r)}=\ell$.
\end{itemize}
\end{cor}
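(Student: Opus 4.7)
The plan is to specialize Theorem~\ref{Bob-Nik} to the case $S=B$, exploiting the fact that the $\mathcal D$-class $D$ of a band is itself a rectangular band and therefore satisfies the identity $e_{ij}e_{k\ell}=e_{i\ell}$. Since every element of a band is idempotent, every $\mathcal H$-class in $D$ contains its own idempotent, so $\mathcal K=I\times J$ and $\Gamma=\{f_{ij}:i\in I,\,j\in J\}$ as required.

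I would then make convenient choices of anchors and Schreier representatives in order to collapse the relations of types (i) and (ii) of Theorem~\ref{Bob-Nik} to the family (\ref{rel1}). Concretely, take $j(i)=1$ for every $i\in I$, which is legal since $e_{i1}\in H_{i1}$ and which reduces type~(i) to $f_{i1}=1$. For Schreier representatives, take $\rr_1=\es$, $\rr_j=e_{1j}$ for $j\neq 1$, and $\rr'_j=e_{11}$; the rectangular-band identities $e_{i1}e_{1j}=e_{ij}$ and $e_{ij}e_{11}=e_{i1}$ instantly verify the required mutually inverse $\mathcal R$-preserving bijections between $L_1$ and $L_j$, and prefix-closure is trivial. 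Because each $\rr_j$ has length at most one, the type~(ii) equation $\rr_j\cdot e_{i\ell}=\rr_\ell$, being literal word equality in $E\st$, can only hold when $j=1$ and $i=1$, in which case it degenerates to $f_{11}=f_{1\ell}$. Combined with the type~(i) relations, this gives exactly (\ref{rel1}).

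It then remains to translate the type~(iii) relations into the language of the actions $\sigma_\varepsilon^{(l)},\sigma_\varepsilon^{(r)}$. The key observation is that for $\varepsilon\in B$ the products $\varepsilon e_{ij}$ and $e_{ij}\varepsilon$ remain in $D$ precisely when $D_\varepsilon\geq D$, which is why the corollary imposes that hypothesis explicitly (in Theorem~\ref{Bob-Nik} it is hidden inside the very notion of a singular square). Under that hypothesis one may write $\varepsilon e_{ij}=e_{\sigma^{(l)}(i)j}$ and $e_{ij}\varepsilon=e_{i(j)\sigma^{(r)}}$, and the equations $\varepsilon e=e$, $\varepsilon g=g$, $e=f\varepsilon$ defining case~(a) of singularisation translate directly into $\sigma^{(l)}(i)=i$, $\sigma^{(l)}(k)=k$ together with a common value of $\sigma^{(r)}$ on $\{j,\ell\}$; the cosmetic choice of making this common value equal to $\ell$ (rather than $j$) is legitimate because the relation $f_{ij}^{-1}f_{i\ell}=f_{kj}^{-1}f_{k\ell}$ is invariant under the swap $j\leftrightarrow\ell$. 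Case~(b) is handled symmetrically by interchanging the roles of rows and columns.

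The only real difficulty here is bookkeeping. Once the Schreier system above is fixed, the type~(ii) relations collapse essentially for free, and the translation of the singular-square conditions into the $\sigma$-language is then forced by the rigidity of multiplication inside a rectangular band; no nontrivial calculation is needed beyond matching the two labelling conventions via the symmetries $j\leftrightarrow\ell$ and $i\leftrightarrow k$.
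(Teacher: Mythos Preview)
Your proposal is correct and follows essentially the same route as the paper: the same anchor choice $j(i)=1$, the same Schreier system $\rr_1=\es$, $\rr_j=e_{1j}$, $\rr'_j=e_{11}$, and the same translation of the singular-square conditions into the $\sigma^{(l)},\sigma^{(r)}$ language. Your treatment is in fact slightly more explicit than the paper's in two places: you spell out why the word equation $\rr_j\cdot e_{i\ell}=\rr_\ell$ forces $j=1$ and $i=1$ (the paper simply asserts that type~(ii) reduces to $f_{11}=f_{1j}$), and you note the cosmetic swap $j\leftrightarrow\ell$ (respectively $i\leftrightarrow k$) needed to match the stated form of conditions (a) and (b), which the paper passes over in silence.
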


\begin{proof}
Since $\mathcal{K}=I\times J$, we have a generator $f_{ij}$ for each $i\in I$ and $j\in J$. Furthermore, the same
reason allows us to choose $j(i)=1$ as the anchor for each $i\in I$. Such a choice will imply that the relations of
type (i) from Theorem \ref{Bob-Nik} take the form $f_{i1}=1$, $i\in I$. In particular, we have $f_{11}=1$. As for the
Schreier system, we can choose $\rr_1$ to be the empty word, $\rr_{j}=e_{1j}$ for all $j\in J\setminus\{1\}$ and
$\rr'_j=e_{11}$ for all $j\in J$. The system $\rr_j$, $j\in J$, of words over $E$ is obviously prefix-closed. Since
$e_{i1}e_{ij}=e_{ij}$ and $e_{ij}e_{11}=e_{i1}$ holds for all $i\in I$, $j\in J$, the right multiplications by $e_{ij}$
and $e_{11}$ are indeed mutually inverse bijections between $L_1$ and $L_j$ and between $L_j$ and $L_1$, respectively.
Hence, the relations of type (ii) reduce to $f_{11}=f_{1j}$, that is, $f_{1j}=1$, for all $j\in J$. Thus we have all
the relations \eqref{rel1}. Finally, the conditions (a) and (b) express precisely the singularisation of a square
$(e_{ij},e_{i\ell},e_{kj},e_{k\ell})$ in $D$ by an element $\varepsilon\in B$; therefore, the relations \eqref{rel2}
correspond to relations of type (iii).
\end{proof}

Rectangles $(i,k;j,\ell)\in I\times J$ of type (a) will be said to be \emph{left-right} singular, while those of type
(b) are \emph{up-down} singular (with respect to $\varepsilon$). Another, more compact way of expressing condition (a)
is $i,k\in\im\sigma_\varepsilon^{(l)}$, $\ell\in\im\sigma_\varepsilon^{(r)}$ and
$(j,\ell)\in\kr\sigma_\varepsilon^{(r)}$, while (b) is equivalent to $k\in\im\sigma_\varepsilon^{(l)}$,
$(i,k)\in\kr\sigma_\varepsilon^{(l)}$ and $j,\ell\in\im\sigma_\varepsilon^{(r)}$.

We can now turn to proving our aforementioned result.

\begin{proof}[Proof of Proposition \ref{p1}]
Without any loss of generality, assume that $B\in\mathbf{RSNB}$ (the case when $B$ belongs to $\mathbf{LSNB}$ is dual).
Recall (e.g.\ from \cite[Proposition II.3.8]{Pe-LinS}) that the variety $\mathbf{RSNB}$ satisfies (and is indeed
defined by) the identity $tuv=tvtuv$. Therefore, if $B=\bigcup_{\alpha\in Y}B_\alpha$ is the greatest semilattice
decomposition of $B$, $a\in B$ and $x,y\in D=B_\alpha$ for some $\alpha\in Y$, then $x=xyx$ and $y=yxy$. Hence, we have
$ax=ax(yx)=ayxaxyx$ and $ay=ay(xy)=axyayxy$, implying $ax\,\mathcal{R}\, ay$. In particular, for any $\varepsilon\in B$
such that $D_\varepsilon\geq D$, $\varepsilon e_{ij}\,\mathcal{R}\, \varepsilon e_{k\ell}$ holds in $D$ for all $i,k\in
I$, $j,\ell\in J$, so the transformation $\sigma_\varepsilon^{(l)}$ is a constant function on $I$.

We conclude that there are no proper (non-degenerate) rectangles $(i,k;j,\ell)$ that are left-right singular with
respect to some $\varepsilon\in B$. In other words, all proper singular rectangles in $I\times J$---and thus all
nontrivial relations of $G_{e_{11}}$---are of the up-down kind:
$$f_{ij}^{-1}f_{i\ell}=f_{k_0j}^{-1}f_{k_0\ell},$$
where $j,\ell$ are two fixed points of $\sigma_\varepsilon^{(r)}$, $i\in I$ is arbitrary, and (since in this context
$\sigma_\varepsilon^{(l)}$ is constant) $\im\sigma_\varepsilon^{(l)}=\{k_0\}$, for some $\varepsilon\in B$. However,
now it is straightforward to deduce the relation \eqref{rel2} for \emph{all} $i,k\in I$ and fixed points $j,\ell$ of
$\sigma_\varepsilon^{(r)}$. Thus we are led to define an equivalence $\theta_B$ of $\bigcup_{\varepsilon\in B,
D_\varepsilon\geq D}\im\sigma_\varepsilon^{(r)}=J$ which is the transitive closure of the relation $\rho_B$ defined by
$(j_1,j_2)\in\rho_B$ if and only if $j_1,j_2\in\im\sigma_\varepsilon^{(r)}$ for some $\varepsilon\in B$. Now it is
almost immediate to see that for all $i,k\in I$ and $j,\ell\in J$ such that $(j,\ell)\in\theta_B$ we have that
$$f_{ij}^{-1}f_{i\ell}=f_{kj}^{-1}f_{k\ell}$$ holds in $G_{e_{11}}$. This immediately implies $f_{k\ell}=1$ for all
$k\in I$ and $\ell\in 1/\theta_B$, as well as
$$f_{kj}=f_{k\ell}$$
for all $k\in I$, whenever $(j,\ell)\in\theta_B$. So, let $j_1=1,j_2\dots,j_m\in J$ be a cross-section of $J/\theta_B$.
Then it is straightforward to eliminate all the relations from the presentation of $G_{e_{11}}$ while reducing its
generating set to $$\{f_{ij_r}:\ i\in I\setminus\{1\},\;2\leq r\leq m\}.$$ In other words, $G_{e_{11}}$ is a free group
of rank $(|I|-1)(m-1)$.
\end{proof}

\begin{proof}[Proof of Proposition \ref{p2}]
Let $B$ be the subband of the free regular band on four generators $a,b,c,d$ consisting of two $\mathcal{D}$-classes: a
$2\times 2$ class $D_1$ consisting of elements $ab,aba,ba,bab$ and a $4\times 4$ class $D_0$ consisting of elements of
the form $\uu_1\vv\uu_2$, where $\uu_1,\uu_2\in\{ab,ba\}$ and $\vv\in\{cd,cdc,dc,dcd\}$. So, we can take
$I=\{abcd,abdc,bacd,badc\}$, the set of all initial parts of words from $D_0$, and $J=\{cdba,dcba,cdab,dcab\}$, the set
of all final parts of those words. A direct computation shows that
\begin{align*}
&\sigma_{ab}^{(l)}=\sigma_{aba}^{(l)}=\left(
\begin{array}{llll}
abcd & abdc& badc & bacd\\
abcd & abdc& abdc & abcd
\end{array}\right),\\
&\sigma_{ba}^{(l)}=\sigma_{bab}^{(l)}=\left(
\begin{array}{llll}
abcd & abdc& badc & bacd\\
bacd & badc& badc & bacd
\end{array}\right),\\
&\sigma_{ab}^{(r)}=\sigma_{bab}^{(r)}=\left(
\begin{array}{llll}
cdba & cdab & dcab & dcba\\
cdab & cdab & dcab & dcab
\end{array}\right),\\
&\sigma_{ba}^{(r)}=\sigma_{aba}^{(r)}=\left(
\begin{array}{llll}
cdba & cdab & dcab & dcba\\
cdba & cdba & dcba & dcba
\end{array}\right).
\end{align*}
If we enumerate (for brevity of further calculations) $abcd\to 1,abdc\to 2,badc\to 3,bacd\to 4$ and $cdba\to 1,cdab\to
2,dcab\to 3,dcba\to 4$, we get
\begin{align*}
&\sigma_{ab}^{(l)}=\sigma_{aba}^{(l)}=\left(
\begin{array}{llll}
1 & 2 & 3 & 4\\
1 & 2 & 2 & 1
\end{array}\right),
&\sigma_{ba}^{(l)}=\sigma_{bab}^{(l)}=\left(
\begin{array}{llll}
1 & 2 & 3 & 4\\
4 & 3 & 3 & 4
\end{array}\right),\\
&\sigma_{ab}^{(r)}=\sigma_{bab}^{(r)}=\left(
\begin{array}{llll}
1 & 2 & 3 & 4\\
2 & 2 & 3 & 3
\end{array}\right),
&\sigma_{ba}^{(r)}=\sigma_{aba}^{(r)}=\left(
\begin{array}{llll}
1 & 2 & 3 & 4\\
1 & 1 & 4 & 4
\end{array}\right).
\end{align*}
Hence, the list of singular rectangles is exhausted by:
\begin{align*}
&(1,2;1,2),(1,2;3,4),(3,4;1,2),(3,4;3,4),\\
&(1,4;2,3),(1,4;1,4),(2,3;2,3),(2,3;1,4).
\end{align*}
This results in $f_{11}=f_{12}=f_{13}=f_{14}=f_{21}=f_{31}=f_{41}=f_{22}=f_{44}=1$ and
$$\begin{array}{lll}
f_{23}=f_{24}, &f_{24}=f_{34}, &f_{43}^{-1}=f_{33}^{-1}f_{34}\\[1mm]
f_{32}=f_{42}, &f_{42}=f_{43}, &f_{23}=f_{32}^{-1}f_{33}.
\end{array}$$

\begin{figure}[th]\centering
\begin{picture}(120.00,132.50)\thinlines
\put(0.00,5.00){\line(0,1){120.00}} \put(30.00,5.00){\line(0,1){120.00}} \put(60.00,5.00){\line(0,1){120.00}}
\put(90.00,5.00){\line(0,1){120.00}} \put(120.00,5.00){\line(0,1){120.00}} \put(0.00,5.00){\line(1,0){120.00}}
\put(0.00,35.00){\line(1,0){120.00}} \put(0.00,65.00){\line(1,0){120.00}} \put(0.00,95.00){\line(1,0){120.00}}
\put(0.00,125.00){\line(1,0){120.00}}

\linethickness{2pt} \put(5.00,9.00){\line(0,1){52.00}} \put(55.00,9.00){\line(0,1){52.00}}
\put(4.00,10.00){\line(1,0){52.00}} \put(4.00,60.00){\line(1,0){52.00}} \put(5.00,69.00){\line(0,1){52.00}}
\put(55.00,69.00){\line(0,1){52.00}} \put(4.00,70.00){\line(1,0){52.00}} \put(4.00,120.00){\line(1,0){52.00}}
\put(65.00,9.00){\line(0,1){52.00}} \put(115.00,9.00){\line(0,1){52.00}} \put(64.00,10.00){\line(1,0){52.00}}
\put(64.00,60.00){\line(1,0){52.00}} \put(65.00,69.00){\line(0,1){52.00}} \put(115.00,69.00){\line(0,1){52.00}}
\put(64.00,70.00){\line(1,0){52.00}} \put(64.00,120.00){\line(1,0){52.00}}

\put(35.00,39.00){\line(0,1){52.00}} \put(85.00,39.00){\line(0,1){52.00}} \put(34.00,40.00){\line(1,0){52.00}}
\put(34.00,90.00){\line(1,0){52.00}} \put(35.00,99.00){\line(0,1){26.00}} \put(85.00,99.00){\line(0,1){26.00}}
\put(34.00,100.00){\line(1,0){52.00}} \put(35.00,5.00){\line(0,1){26.00}} \put(85.00,5.00){\line(0,1){26.00}}
\put(34.00,30.00){\line(1,0){52.00}}

\put(25.00,39.00){\line(0,1){52.00}} \put(0.00,40.00){\line(1,0){26.00}} \put(0.00,90.00){\line(1,0){26.00}}
\put(95.00,39.00){\line(0,1){52.00}} \put(94.00,40.00){\line(1,0){26.00}} \put(94.00,90.00){\line(1,0){26.00}}
\put(0.00,30.00){\line(1,0){26.00}} \put(25.00,5.00){\line(0,1){26.00}} \put(94.00,30.00){\line(1,0){26.00}}
\put(95.00,5.00){\line(0,1){26.00}} \put(0.00,100.00){\line(1,0){26.00}} \put(25.00,99.00){\line(0,1){26.00}}
\put(94.00,100.00){\line(1,0){26.00}} \put(95.00,99.00){\line(0,1){26.00}}

\put(15.00,20.00){\circle*{4}} \put(45.00,20.00){\circle*{4}} \put(75.00,20.00){\circle*{4}}
\put(105.00,20.00){\circle*{4}} \put(15.00,50.00){\circle*{4}} \put(45.00,50.00){\circle*{4}}
\put(75.00,50.00){\circle*{4}} \put(105.00,50.00){\circle*{4}} \put(15.00,80.00){\circle*{4}}
\put(45.00,80.00){\circle*{4}} \put(75.00,80.00){\circle*{4}} \put(105.00,80.00){\circle*{4}}
\put(15.00,110.00){\circle*{4}} \put(45.00,110.00){\circle*{4}} \put(75.00,110.00){\circle*{4}}
\put(105.00,110.00){\circle*{4}}

\put(-5.00,20.00){\makebox(0,0)[rc]{$4$}} \put(-5.00,50.00){\makebox(0,0)[rc]{$3$}}
\put(-5.00,80.00){\makebox(0,0)[rc]{$2$}} \put(-5.00,110.00){\makebox(0,0)[rc]{$1$}}
\put(15.00,130.00){\makebox(0,0)[cb]{$1$}} \put(45.00,130.00){\makebox(0,0)[cb]{$2$}}
\put(75.00,130.00){\makebox(0,0)[cb]{$3$}} \put(105.00,130.00){\makebox(0,0)[cb]{$4$}}
\end{picture}
\caption{The rectangles in $D_0$ singularised by  elements of $D_1$}
\end{figure}
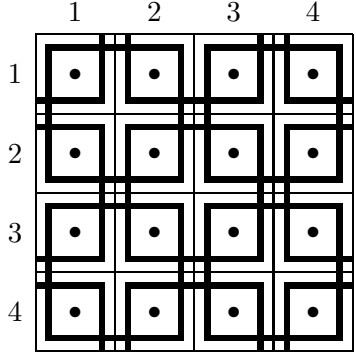

\noindent If we denote $x=f_{23}$ and $y=f_{32}$ we obviously remain with these two generators for $G_{abcdba}$ and a
single relation
$$yx=f_{33}=xy,$$
so $G_{abcdba}\cong \mathbb{Z}\oplus\mathbb{Z}$.
\end{proof}

This completes the proof of Theorem \ref{t-main}.

\begin{rmk}\rm
The band $B$ from the previous proof can be also realised as a regular subband of the free band $FB_3$ on three
generators $a,b,c$ whose elements are from $D'_1=\{ab,aba,ba,bab\}$ and $D'_0=\{\uu c\vv:\ \uu,\vv\in D'_1\}$.
\end{rmk}

We finish the note by several problems that might be subjects of future research in this direction.

\begin{q}\rm
Characterise all bands $B$ with the property that $\ig{B}$ has a non-free maximal subgroup.
\end{q}

\begin{q}\rm
Characterise all groups that arise as maximal subgroups of $\ig{B}$ for some band $B$. The same problem stands for
regular bands $B$, and in fact for $B\in\mathbf{V}$ for any particular band variety $\mathbf{V}\geq\mathbf{RB}$.
\end{q}

\begin{q}\rm
Given a band variety $\mathbf{V}$ and an integer $n\geq 1$, describe the maximal subgroups of
$\ig{\mathfrak{F}_n\mathbf{V}}$, where $\mathfrak{F}_n\mathbf{V}$ denotes the $\mathbf{V}$-free band on a set of $n$
free generators \cite{PeSi}.
\end{q}

\begin{ack}\rm
The author is grateful to the anonymous referee, whose careful reading, comments and suggestions significantly improved
the presentation of the results.
\end{ack}


\end{document}